\begin{document}

\title*{A Block Jacobi Sweeping Preconditioner for the Helmholtz Equation}
\author{Ruiyang Dai}
\institute{Ruiyang Dai \at Laboratory J.L. Lions, Sorbonne Universit\'e, Paris, France. \email{ruiyang.dai@upmc.fr}}
%
%
\maketitle



\section{Introduction}
\label{sec:introduction}

Solving Helmholtz problems using numerical methods is challenging due to the large, indefinite, and ill-conditioned linear systems that result, which cannot be solved using classical direct or iterative solvers \cite{ernst2011difficult}.
While optimized Schwarz (OS) methods have been proposed as an alternative, the number of iterations required by Krylov methods increases with the number of subdomains, especially for layer-type domain decompositions \cite{nataf1994optimal}.
Preconditioners, such as sweeping preconditioners, are necessary when using iterative methods to solve Helmholtz problems. 
Lately, there has been significant interest in sweeping preconditioners, invented by \cite{engquist2011sweeping-moving,engquist2011sweeping}, that achieve quasi-linear asymptotic complexity.
Despite their effectiveness, sweeping preconditioners face challenges with parallel scalability due to the inherently sequential nature of their operations, as well as the need to ensure accurate and consistent information transfer between subdomains. 
These challenges can restrict the use of layer-type domain decompositions.


To address these challenges, recent research has focused on improving parallel performance through new sweeping strategies on checkerboard domain decompositions that can handle more general domain decompositions. 
Several sweeping algorithms have been proposed that improve parallelism by ensuring consistent transfer among subdomains, such as L-sweeps preconditioners \cite{taus2020sweeps}, trace transfer-based diagonal sweeping preconditioners \cite{leng2022trace}, and multidirectional sweeping preconditioners \cite{dai2022multidirectionnal}, with high-order transmission conditions and cross-point treatments \cite{modave2020non}.


Subdomains in sweeping algorithms can be assigned to Message Passing Interface (MPI) ranks based on rows or columns. 
This enables parallel application of sweeping algorithms for a single right-hand side. 
However, these approaches still have limitations, including long preconditioning procedures, waste of computation resources, and relatively high computation costs.
To overcome these limitations, the authors propose a block Jacobi sweeping preconditioner that uses block Jacobi matrices to decompose full sweeps into several partial sweeps, 
which can be thought of as sweeps that operate on a subset of the subdomains.
These partial sweeps can be performed concurrently. This approach enhances scalability and makes full use of resources on parallel computer architectures.


\section{Notations}
\label{sec:notations}

Let $\mathbf{i} = (i_1,i_2) \in \mathbb{N}^2$ be a multi-index denoting the subdomain number.
We define the discrete $l^1$ norm by:
$
|\mathbf{i}|_1 := |i_1| + |i_2|.
$
We use the convention that two multi-indices $\mathbf{i}$ and $\mathbf{j}$ are equal if and only if $i_1=j_1$ and $i_2=j_2$.
\begin{definition}
The \textit{lexicographic order} on multi-indices is 
the relation defined by $\mathbf{i} < \mathbf{j}$ if and only if
$|\mathbf{i}|_1 < |\mathbf{j}|_1$, or $|\mathbf{i}|_1 = |\mathbf{j}|_1$, and $i_1 < j_1$.
\end{definition}

\begin{definition}
The \textit{lexicographic order} on pair multi-indices $(\mathbf{i}, \mathbf{j}) \in \mathbb{N}^2 \times \mathbb{N}^2$ is 
the relation defined by $(\mathbf{i}, \mathbf{j}) < (\mathbf{k}, \mathbf{l})$
if and only if
$\mathbf{i} < \mathbf{k}$, or $\mathbf{i} = \mathbf{k}$, and $\mathbf{j} < \mathbf{l}$.
\end{definition}

We define a function $m$ maps pair multi-indices in the lexicographic order 
to the natural numbers in a monotonically increasing fashion 
$m:\mathbb{N}^2 \times \mathbb{N}^2 \rightarrow \mathbb{N}$, 
such that $m((1,1), (1,2)) = 1$, $m((1,1), (2,1)) = 2$, $m((1,2), (1,1)) = 3$, etc. 

We consider $\Omega \subset \mathbb{R}^2$ be a square domain with boundary $\partial \Omega$, 
which is given by the intersection of the scattered boundary $\partial\Omega^{\text{sca}}$ 
with the external artificial boundaries $\Gamma_i^{\infty}$ for $i=1,2,3,4$,
and its a non-overlapping checkerboard partition, which consists in a lattice of rectangular non-overlapping subdomains 
$\Omega_{\mathbf{i}}$ with $N_1$ columns and $N_2$ rows ($i_1 = 1,\ldots, N_1$, and $i_2 = 1,\ldots, N_2$), 
that is
$$
\overline{\Omega} = \bigcup
\overline{\Omega}_{\mathbf{i}}, \quad \text{and} \quad
\Omega_{\mathbf{i}} \cap \Omega_{\mathbf{j}} = \varnothing \quad \text{for} \quad \mathbf{j} \neq \mathbf{i}.
$$
And we say that $\exists \mathbf{i}$, such that $\Omega^{\text{sca}}\subseteq \Omega^\circ_{\mathbf{i}}$ and $\partial\Omega^{\text{sca}} \cap \partial\Omega_{\mathbf{i}} = \varnothing$. 
The boundary of a subdomain $\Omega_{\mathbf{i}}$ is split into two parts: the exterior part $\partial \Omega_{\mathbf{i}} \cap \Gamma^{\infty}_i$
and the interior part including decomposed interior interfaces
$\Gamma_{\mathbf{i},\mathbf{j}} := \partial\Omega_{\mathbf{i}} \cap \partial\Omega_{\mathbf{j}}$ ($\mathbf{j} \neq \mathbf{i}$),
and $\Gamma_{\mathbf{i},\mathbf{j}} = \Gamma_{\mathbf{j},\mathbf{i}}$.
There are $N_{\text{dom}} = N_1 \times N_2$ subdomains, $N_e = 2N_1N_2-N_1-N_2$ interior interfaces. We define the number of diagonal groups $N_g := N_1+N_2-1$.

\section{Non-overlapping domain decomposition method}
\label{sec:non_overlapping}

We study the 2D Helmholtz equation in $\Omega$ with an absorbing boundary condition on $\Gamma^{\infty}_i$.
We seek the field $u(\mathbf{x})$ that verifies
\begin{equation}
  \left\{
  \begin{aligned}
    (- \Delta            - \kappa^2)   u &= 0, && \text{in } \Omega, \\
    (\partial_{\bm{n}_i} - \mathscr{T})u &= 0, && \text{on } \Gamma^{\infty}_i, \\
    u &= -u^{\text{inc}}, && \text{on } \partial\Omega^{\text{sca}}, \\
  \end{aligned}
  \right.
  \label{eqn:origPbm}
\end{equation}
where $\kappa$ is the wavenumber, $u^{\text{inc}}$ is the incident wave, $\partial_{\bm{n}}$ is the exterior normal derivative, 
and $\mathscr{T}$ is an impedance operator to be defined.
We take the convention that the time-dependence of the fields is $e^{-\imath\omega t}$,
where $\omega$ is the angular frequency and $t$ is the time.

The domain decomposition method consists in considering the $N_{\text{dom}}$ local sub-problems coupled by the Robin conditions:
Seek the field $u_{\mathbf{i}}(\mathbf{x})$ that verifies
\begin{equation}
  \left\{
  \begin{aligned}
  (- \Delta -                \kappa^2      ) u_{\mathbf{i}} &= 0, && \text{in } \Omega_{\mathbf{i}}, \\
  ( \partial_{\bm{n}_{\mathbf{i},i}}          - \mathscr{T}) u_{\mathbf{i}} &= 0, && \text{on } \partial\Omega_{\mathbf{i}} \cap \Gamma^{\infty}_i, \\
  ( \partial_{\bm{n}_{\mathbf{i},\mathbf{j}}} - \mathscr{T}) u_{\mathbf{i}} &=
  (-\partial_{\bm{n}_{\mathbf{j},\mathbf{i}}} - \mathscr{T}) u_{\mathbf{j}} ,     && \text{on } \Gamma_{\mathbf{i},\mathbf{j}}, \forall \mathbf{j} \in D_{\mathbf{i}}, \\
  u_{\mathbf{i}} &= -u^{\text{inc}}, && \text{on } \partial\Omega_{\mathbf{i}}\cap\partial\Omega^{\text{sca}}, \\
  \end{aligned}
  \right.
  \label{eqn:subPbm}
\end{equation}
where the set
$
D_{\mathbf{i}} := \left\{ \mathbf{j} \, | \, \mathbf{j} \neq \mathbf{i} \text{ and } \Gamma_{\mathbf{i}, \mathbf{j}} \neq \varnothing \right\}.
$
The paper uses high-order absorbing boundary conditions (HABCs) as transmission conditions, which are effective for both layered-type and checkerboard-type domain decompositions \cite{boubendir2012quasi,modave2020non}. 
However, special treatment is required at corners in 2D cases for polygonal domains. 
Section \ref{sec:interface_problem} of the paper employs HABCs, but in the next section, the paper uses less effective boundary conditions based on the basic impedance operator to investigate the algebraic structure of the interface problem for clarity.

\section{Interface problem}
\label{sec:interface_problem}

To derive the interface problem, let's introduce $w_{\mathbf{i}}(\mathbf{x})$ a lifting of the source: Seek $w_{\mathbf{i}}(\mathbf{x})$ that verifies
\begin{equation}
  \left\{
  \begin{aligned}
  (- \Delta - \kappa^2)                                      w_{\mathbf{i}} &= 0, && \text{in } \Omega_{\mathbf{i}}, \\
  ( \partial_{\bm{n}_{\mathbf{i},i}}          - \mathscr{T}) w_{\mathbf{i}} &= 0, && \text{on } \partial\Omega_{\mathbf{i}} \cap \Gamma^{\infty}_i, \\
  ( \partial_{\bm{n}_{\mathbf{i},\mathbf{j}}} - \mathscr{T}) w_{\mathbf{i}} &= 0, && \text{on } \Gamma_{\mathbf{i},\mathbf{j}}, \forall \mathbf{j} \in D_{\mathbf{i}}, \\
  u_{\mathbf{i}} &= -u^{\text{inc}}, && \text{on } \partial\Omega_{\mathbf{i}}\cap\partial\Omega^{\text{sca}}, \\
  \end{aligned}
  \right.
  \label{eqn:subPbmLifting}
\end{equation}
By the linearity of the problem, the field $u_{\mathbf{i}}$ can be decomposed into $v_{\mathbf{i}} + w_{\mathbf{i}}$, where $v_{\mathbf{i}}$ is the field \eqref{eqn:subPbm} after lifting the sources by \eqref{eqn:subPbmLifting}.
We introduce the local scattering operator
$
  \mathscr{S}_{m(\mathbf{j},\mathbf{i}),m(\mathbf{i},\mathbf{k})}: x_{m(\mathbf{i},\mathbf{k})} 
  \to (-\partial_{\bm{n}_{\mathbf{i},\mathbf{j}}} - \mathscr{T}) v_{\mathbf{i}}
$
where
\begin{equation}
  \left\{
  \begin{aligned}
    (- \Delta - \kappa^2)                                      v_{\mathbf{i}} &= 0,      && \text{in } \Omega_{\mathbf{i}}, \\
    (\partial_{\bm{n}_{\mathbf{i},i }}          - \mathscr{T}) v_{\mathbf{i}} &= 0,      && \text{on } \partial\Omega_{\mathbf{i}} \cap \Gamma^{\infty}_i, \\
    (\partial_{\bm{n}_{\mathbf{i},\mathbf{k} }} - \mathscr{T}) v_{\mathbf{i}} &= x_{m(\mathbf{i},\mathbf{k})}, && \text{on } \Gamma_{\mathbf{i},\mathbf{k}}, \\
    (\partial_{\bm{n}_{\mathbf{i},\mathbf{l} }} - \mathscr{T}) v_{\mathbf{i}} &= 0,      && \text{on } \Gamma_{\mathbf{i},\mathbf{l}}, \forall  \mathbf{l} \neq \mathbf{k},
  \end{aligned}
  \right.
\end{equation}
and $\mathbf{j},\mathbf{k},\mathbf{l} \in D_{\mathbf{i}}$.
Using the linearity of the problem and the above scattering operator,
we obtain the interface problem
$$
( \partial_{\bm{n}_{\mathbf{j},\mathbf{i}}} - \mathscr{T}) v_{\mathbf{j}} = \sum_{\mathbf{k} \in D_{\mathbf{i}}} \mathscr{S}_{m(\mathbf{j},\mathbf{i}),m(\mathbf{i},\mathbf{k})}
( \partial_{\bm{n}_{\mathbf{i},\mathbf{k}}} - \mathscr{T}) v_{\mathbf{i}} +
(-\partial_{\bm{n}_{\mathbf{i},\mathbf{j}}} - \mathscr{T}) w_{\mathbf{i}} , \quad \mathbf{j} \in D_{\mathbf{i}} .
$$
We introduce the global scattering matrix 
$S \in M_{2 N_e}(\mathscr{S}_{m(\mathbf{j},\mathbf{i}),m(\mathbf{i},\mathbf{k})})$,
the global additional variable vector
$g \in M_{2 N_e \times 1}(g_{m(\mathbf{j},\mathbf{i})})$,
and the global right-hand-side vector
$b \in M_{2 N_e \times 1}(b_{m(\mathbf{j},\mathbf{i})})$, where
$$
g_{m(\mathbf{j},\mathbf{i})} = (+\partial_{\bm{n}_{\mathbf{j},\mathbf{i}}} - \mathscr{T}) v_{\mathbf{j}}, \quad
b_{m(\mathbf{j},\mathbf{i})} = (-\partial_{\bm{n}_{\mathbf{i},\mathbf{j}}} - \mathscr{T}) w_{\mathbf{i}}.
$$
We obtain that $g$ is the solution to the global matrix 
\begin{equation}
(I - S) g = b,
\label{eqn:matrix:abstractSys}
\end{equation}
or
\begin{equation}
g_{m(\mathbf{j},\mathbf{i})} - \displaystyle \sum_{\mathbf{k}} \mathscr{S}_{m(\mathbf{j},\mathbf{i}),m(\mathbf{i},\mathbf{k})} g_{m(\mathbf{i},\mathbf{k})} = b_{m(\mathbf{j},\mathbf{i})},
\quad \forall \mathbf{j}, \text{ for } \mathbf{i},\mathbf{k} \in D_{\mathbf{j}}.
\end{equation}

\section{Sweeping preconditioner}
\label{sec:sweeping}

Let $V_{i}$ be the $2N_e \times n_i$ matrix
$
V_i = (e_{m(\mathbf{i},\mathbf{j})}),
$
with 
$
|\mathbf{i}|_1 = i \, (i = 2,\ldots,N_g+1),
$
where each $e_{m(\mathbf{i},\mathbf{j})}$ is the $m(\mathbf{i},\mathbf{j})$-th 
column of the $2N_e \times 2N_e$ identity matrix, 
and $n_i$ is the number of columns.
One has $V_{i}^{\top} V_{j} = \mathbf{0}$, if $i \neq j$.  
Let $S_{i,j}$ be the $n_i \times n_j$ matrix
$
S_{i,j} = V_i^{\top} S V_j.
$

\begin{proposition} \label{proposition:1}
The upper and lower triangular matrix of the global matrix \eqref{eqn:matrix:abstractSys} 
can be decomposed by Gaussian elimination. 
\end{proposition}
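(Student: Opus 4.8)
The plan is to read the block sparsity of $S$ off the supports of the local scattering operators, to observe that grouping the interface unknowns by diagonal groups makes $I-S$ block tridiagonal with identity diagonal blocks, and then to carry out block Gaussian elimination along those groups. The first ingredient is the scalar sparsity of $S$: from the interface equation displayed just after \eqref{eqn:matrix:abstractSys}, the only entries in the row of $S$ indexed by the pair $(\mathbf{j},\mathbf{i})$ are the $\mathscr{S}_{m(\mathbf{j},\mathbf{i}),m(\mathbf{i},\mathbf{k})}$ with $\mathbf{k}\in D_{\mathbf{i}}$; equivalently, the entry of $S$ at position $(m(\mathbf{a},\mathbf{b}),m(\mathbf{c},\mathbf{d}))$ can be nonzero only if $\mathbf{b}=\mathbf{c}$ (and then $\mathbf{a},\mathbf{d}\in D_{\mathbf{b}}$): the second multi-index of the row pair, i.e.\ the subdomain whose scattering operator is applied, must coincide with the first multi-index of the column pair.

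Next I would transfer this to the block level. For $i,j\in\{2,\dots,N_g+1\}$ an entry of $S_{i,j}=V_i^{\top}SV_j$ lies at a position $(m(\mathbf{a},\mathbf{b}),m(\mathbf{c},\mathbf{d}))$ with $|\mathbf{a}|_1=i$ and $|\mathbf{c}|_1=j$; by the previous point it vanishes unless $\mathbf{b}=\mathbf{c}$ with $\mathbf{b}\in D_{\mathbf{a}}$. In the checkerboard partition $D_{\mathbf{a}}$ consists of the axis-adjacent subdomains, so $\mathbf{b}\in D_{\mathbf{a}}$ forces $|\mathbf{b}|_1=|\mathbf{a}|_1\pm1$; since $j=|\mathbf{c}|_1=|\mathbf{b}|_1$, this gives $j=i\pm1$. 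Hence $S_{i,j}=\mathbf{0}$ whenever $|i-j|\neq1$, and in particular $S_{i,i}=\mathbf{0}$. Because $V_2,\dots,V_{N_g+1}$ select pairwise disjoint coordinates that together exhaust all $2N_e$ of them, this means precisely that, in the block partition they induce, $I-S$ is block tridiagonal with identity diagonal blocks and off-diagonal blocks $-S_{i,i\pm1}$. (The distinguished subdomain carrying $\Omega^{\text{sca}}$ affects only some local solves, not this coupling pattern, so it plays no role here.)

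I would then run block Gaussian elimination on the diagonal groups in the order $i=2,3,\dots,N_g+1$. The first pivot is $I$; since the only block coupling group $i$ to the already processed groups is $S_{i,i-1}$, eliminating group $i-1$ modifies only the diagonal block of group $i$, replacing it by the Schur complement $T_i:=I-S_{i,i-1}T_{i-1}^{-1}S_{i-1,i}$ (with $T_2:=I$), and leaves the super-diagonal block $-S_{i,i+1}$ untouched. This produces the factorization $I-S=LU$ with $L$ block lower bidiagonal (identity diagonal blocks, sub-diagonal blocks $-S_{i,i-1}T_{i-1}^{-1}$) and $U$ block upper bidiagonal (diagonal blocks $T_i$, super-diagonal blocks $-S_{i,i+1}$) -- i.e.\ precisely a sequential sweep through the diagonal groups.

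The two sparsity steps are routine index tracking; the point needing care is that this recursion is well defined, i.e.\ that each pivot $T_i$ is invertible, equivalently that every leading principal block submatrix of $I-S$ is invertible. Here I would lean on the contractivity of the impedance scattering operators, which holds for the basic impedance operator used in this part of the paper: then $S$ is a contraction in the natural interface norm and each leading principal block submatrix of $I-S$ is $I$ minus a compression of $S$, so, together with the radiation loss carried by the absorbing boundaries $\Gamma^{\infty}_i$ and the well-posedness of \eqref{eqn:origPbm} (equivalently, invertibility of $I-S$ in $(I-S)g=b$), this should exclude a spurious eigenvalue $1$ for any such compression and keep all pivots invertible, so that $L$ and $U$ exist. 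I expect pinning down this pivot invertibility cleanly to be the main obstacle; the block tridiagonal structure itself is essentially bookkeeping.
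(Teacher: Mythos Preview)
You have misread what the proposition asserts. The objects being ``decomposed by Gaussian elimination'' are not $I-S$ itself but the lower and upper triangular \emph{parts} of $I-S$, denoted $S_L$ and $S_U$: with the block tridiagonal structure you correctly established, these are simply
\[
S_L \;=\; I - \sum_{i=3}^{N_g+1} V_i S_{i,i-1} V_{i-1}^{\top},
\qquad
S_U \;=\; I - \sum_{i=3}^{N_g+1} V_{i-1} S_{i-1,i} V_{i}^{\top}.
\]
The paper's proof shows, using only the orthogonality $V_i^{\top}V_j=0$ for $i\neq j$, that each of these factors as a product of elementary Gauss transforms:
\[
S_L \;=\; \prod_{i=3}^{N_g+1}\bigl(I - V_i S_{i,i-1} V_{i-1}^{\top}\bigr),
\qquad
S_U \;=\; \prod_{i=N_g+1}^{3}\bigl(I - V_{i-1} S_{i-1,i} V_{i}^{\top}\bigr),
\]
because the cross terms in the product vanish via $V_{i-1}^{\top}V_{i+1}=0$. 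That is the whole content: a purely algebraic identity, with no Schur complements and no pivot invertibility to check. These factorizations are exactly what is then exploited to write $S_L^{-1}$ and $S_U^{-1}$ as products in \eqref{eqn:sgs_preconditioner_forward_backward}, noting that each elementary factor squares its nilpotent part to zero.

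Your first two paragraphs, which derive the block tridiagonal structure of $I-S$ from the support of the scattering operators and the geometry of $D_{\mathbf{a}}$, are correct and in fact argue a point the paper takes for granted. But your third and fourth paragraphs prove a different and strictly harder statement: a genuine block $LU$ factorization of $I-S$ with Schur complement pivots $T_i$. The invertibility obstacle you flag is real for \emph{that} statement, but it simply does not arise here, since the paper is only factoring the triangular pieces $S_L$ and $S_U$ separately (and then uses $S_L S_U$ as a preconditioner $P_{\text{SGS}}\approx I-S$, not as an exact factorization). Drop the Schur complement recursion and replace it with the two-line product computation above; your sparsity analysis already gives everything needed.
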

\begin{proof}
We denote the lower triangular matrix of the global matrix \eqref{eqn:matrix:abstractSys} $S_L$. 
Consider the following matrices
$
\prod_i (I - V_i S_{i,i-1} V_{i-1}^{\top}), \quad i = 3,\ldots,N_g+1.
$
$\forall i = 3,\ldots,N_g$, we have
$$
\begin{aligned}
& (I - V_i S_{i,i-1} V_{i-1}^{\top}) (I - V_{i+1} S_{i+1,i} V_{i}^{\top}) 
\\
=
& \, I - V_i S_{i,i-1} V_{i-1}^{\top} - V_{i+1} S_{i+1,i} V_{i}^{\top} 
+ V_i S_{i,i-1} V_{i-1}^{\top} V_{i+1} S_{i+1,I} V_{i}^{\top}
\\
=
& \, I - V_i S_{i,i-1} V_{i-1}^{\top} - V_{i+1} S_{i+1,i} V_{i}^{\top} 
\end{aligned}
$$
The last term at the 2nd line vanishs 
since $V_{i-1}^{\top} V_{i+1}$ is null. 
Hence, we have
$$
  \prod_i (I - V_i S_{i,i-1} V_{i-1}^{\top})
= I - \sum_i   V_i S_{i,i-1} V_{i-1}^{\top}
= S_L.
$$
Similarily, we can proof that 
the upper triangular matrix of the global matrix \eqref{eqn:matrix:abstractSys} $S_U$ 
can be decomposed as
$$
S_U = \prod_i (I - V_{i-1} S_{i-1,i} V_{i}^{\top}), \quad i = N_g+1,\ldots,3,
$$
which is a series of matrices.
\end{proof}

Next, we present the Symmetric Gauss-Seidel (SGS) sweeping preconditioner $P_{\text{SGS}}$.
This matrix can then be rewritten as $P_{\text{SGS}} \approx S_L S_U$,
We can easily invert the matrix $P_{\text{SGS}}^{-1} = S_U^{-1} S_L^{-1}$,
with
\begin{equation}
\begin{aligned}
& S_U^{-1}
= \prod_i (I + V_{i-1} S_{i-1,i} V_{i}^{\top}), \quad i = 3,\ldots,N_g+1, \\
& S_L^{-1}
= \prod_i (I + V_i S_{i,i-1} V_{i-1}^{\top}), \quad i = N_g+1,\ldots,3. \\
\end{aligned}
\label{eqn:sgs_preconditioner_forward_backward}
\end{equation}
Observing Eqn. \eqref{eqn:sgs_preconditioner_forward_backward}, we notice that it consists of a sequential process, in which there are $2 (N_g -1)$ sequential \textit{steps} in total. 

\section{Block Jacobi sweeping preconditioner}
\label{sec:block_jacobi_sweeping}

Let ${W_L}_i$, ${W_U}_i$ be the $2N_e \times s_i$ matrix 
$$
\begin{aligned}
& {W_L}_i = (e_{m(\mathbf{i},\mathbf{j})}), \quad  2+  (i-1)N_1 \leq |\mathbf{i}|_1 \leq  2+         i N_1, \\
& {W_U}_i = (e_{m(\mathbf{i},\mathbf{j})}), \quad (1+N_g)-i N_1 \leq |\mathbf{i}|_1 \leq (1+N_g)-(i-1) N_1, \\
\end{aligned}
$$
where $s_i$ is the number of columns ($e_{m(\mathbf{i},\mathbf{j})}$).
Let ${S_L}_i$, ${S_U}_i$ be the $s_i \times s_i$ matrix
$$
{S_L}_i = {W_L}_i^{\top} S_L {W_L}_i, \quad 
{S_U}_i = {W_U}_i^{\top} S_U {W_U}_i.
$$
According to the \textit{aditive projection processes} \cite{saad2003iterative}, 
the next iterate can be defined as
$$
\begin{aligned}
& x^{(k+1/2)} = x^{(k)}     + \sum_{i}^p {W_L}_i (I_i + {S_L}_i)^{-1} {W_L}_i^{\top} r^{(k)}    , \\
& x^{(k+1)}   = x^{(k)+1/2} + \sum_{i}^p {W_U}_i (I_i + {S_U}_i)^{-1} {W_U}_i^{\top} r^{(k+1/2)}. \\
\end{aligned}
$$
In the equations above, we call $I_i + {S_L}_i$ ($I_i + {S_U}_i$) the forward (backward) block Jacobi preconditioner 
which can be decomposed into a serie of matrices according to the Proposition \ref{proposition:1}.
The forward (backward) block Jacobi preconditioner consists of block diagonals of $S$, 
which can improve the parallel performance of the sweeping preconditioner.
Consequently, there are $2N_1$ sequential steps in the forward and backward block Jacobi preconditioners.

\section{Numerical results}
\label{sec:numerical_results}

\begin{figure}[!tb]
  \centering
  \small
  \begin{subfigure}[!tb]{0.225\textwidth}
  \includegraphics[scale = 0.110]{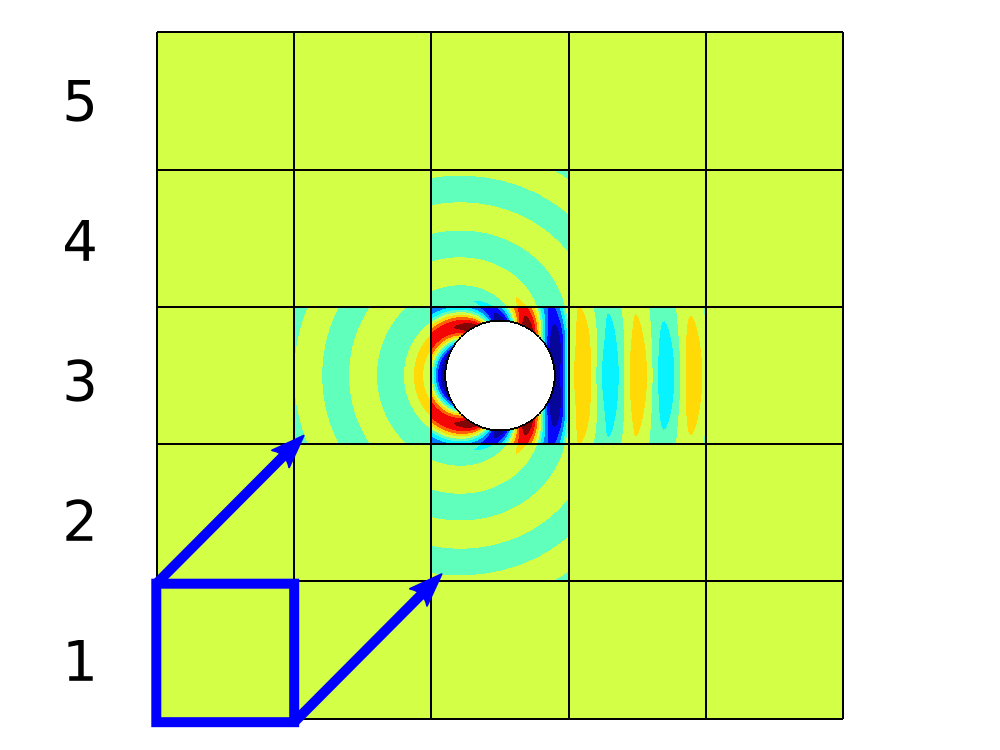} 
  \caption{1st step}
  \end{subfigure}
  \begin{subfigure}[!tb]{0.225\textwidth}
  \includegraphics[scale = 0.110]{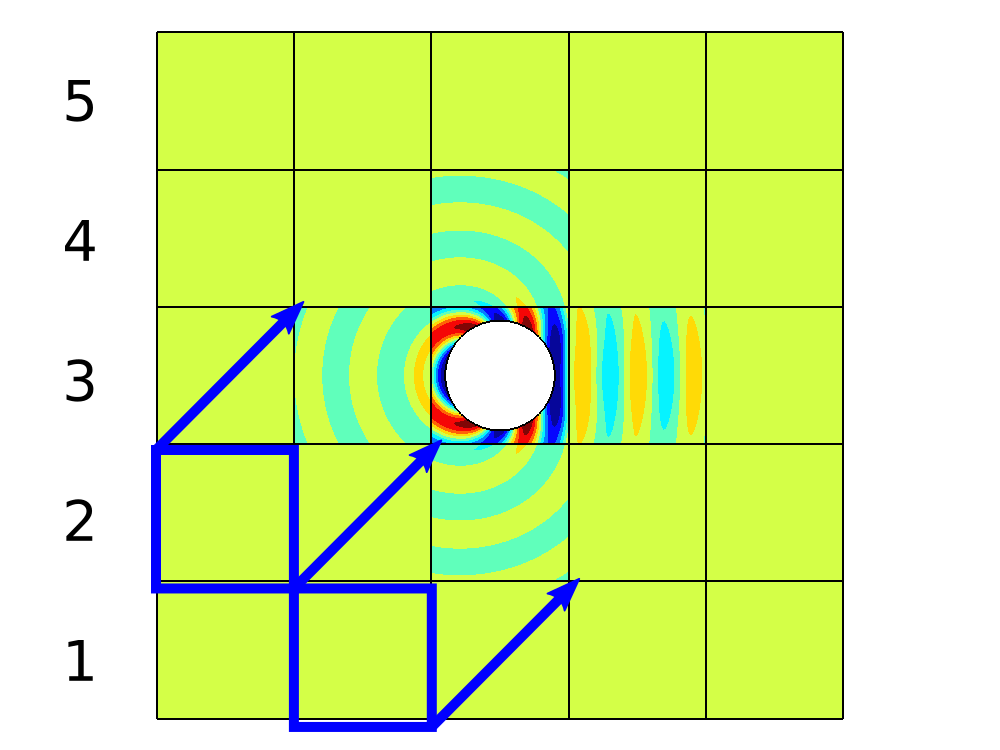} 
  \caption{2nd step}
  \end{subfigure}
  \begin{subfigure}[!tb]{0.225\textwidth}
  \includegraphics[scale = 0.110]{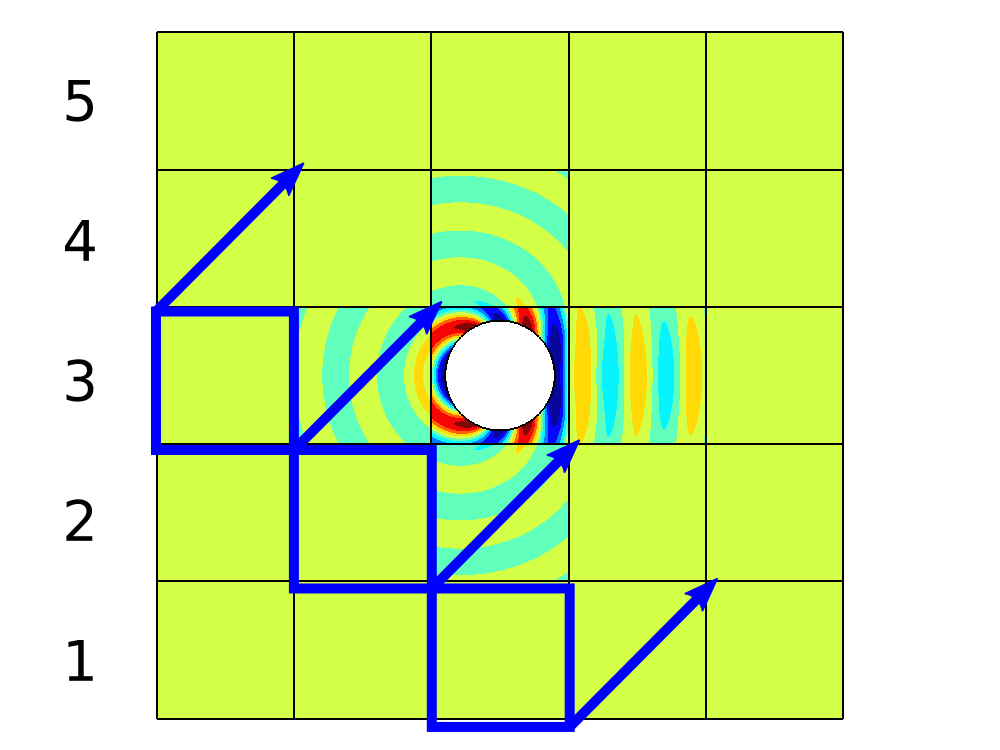} 
  \caption{3rd step}
  \end{subfigure}
  \begin{subfigure}[!tb]{0.225\textwidth}
  \includegraphics[scale = 0.110]{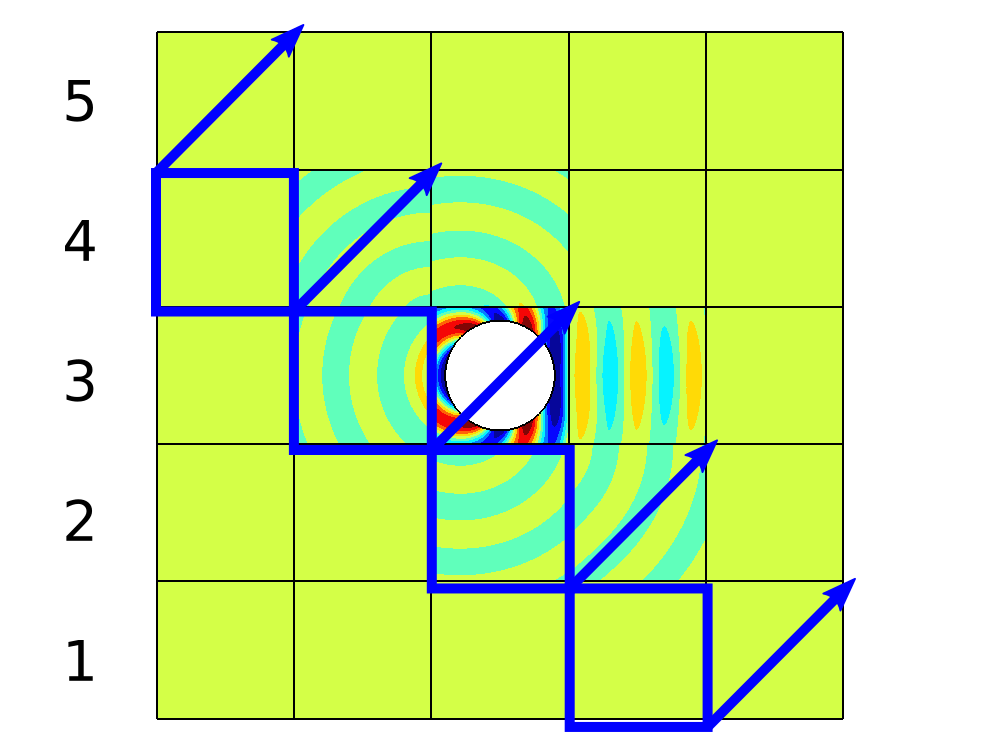} 
  \caption{4th step}
  \end{subfigure}
  \begin{subfigure}[!tb]{0.225\textwidth}
  \includegraphics[scale = 0.110]{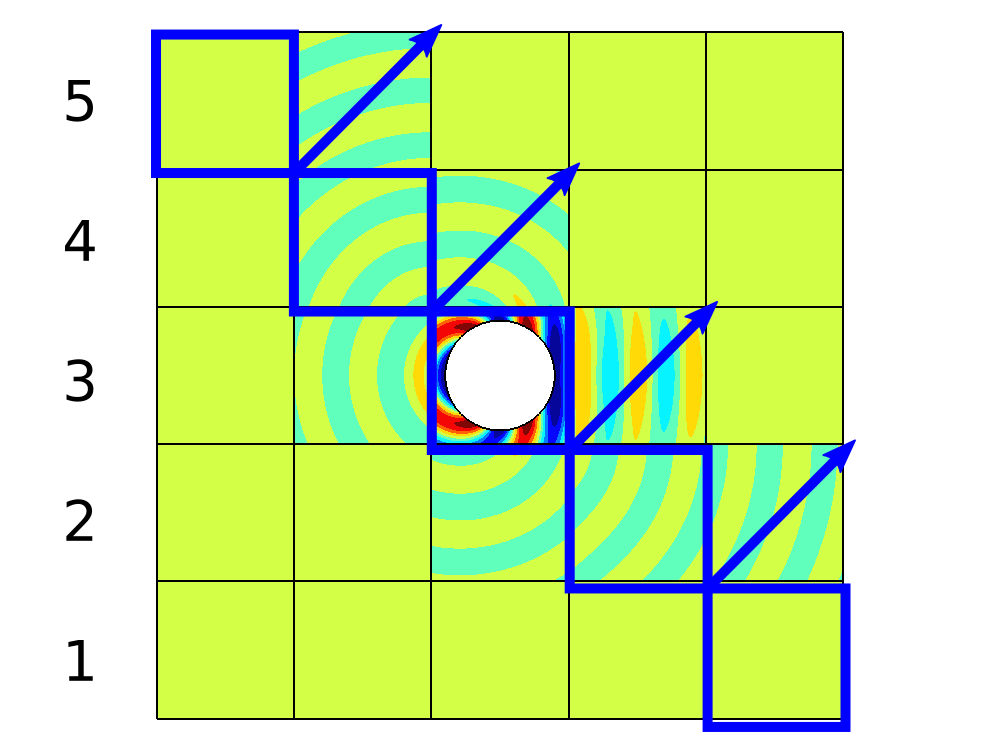} 
  \caption{5th step}
  \end{subfigure}
  \begin{subfigure}[!tb]{0.225\textwidth}
  \includegraphics[scale = 0.110]{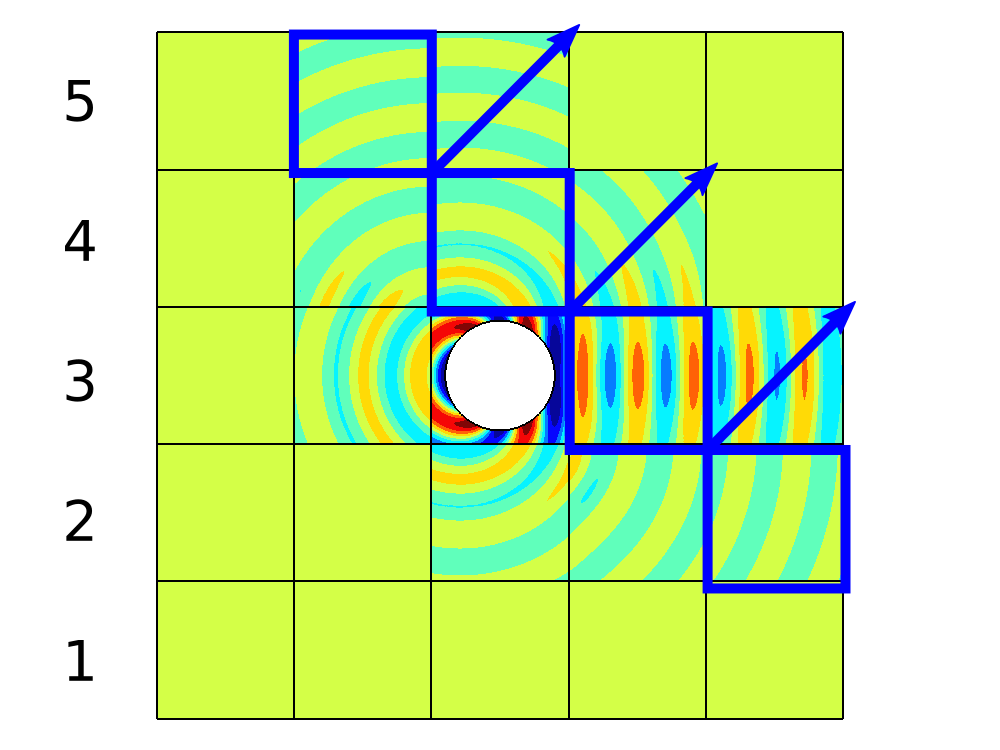} 
  \caption{6th step}
  \end{subfigure}
  \begin{subfigure}[!tb]{0.225\textwidth}
  \includegraphics[scale = 0.110]{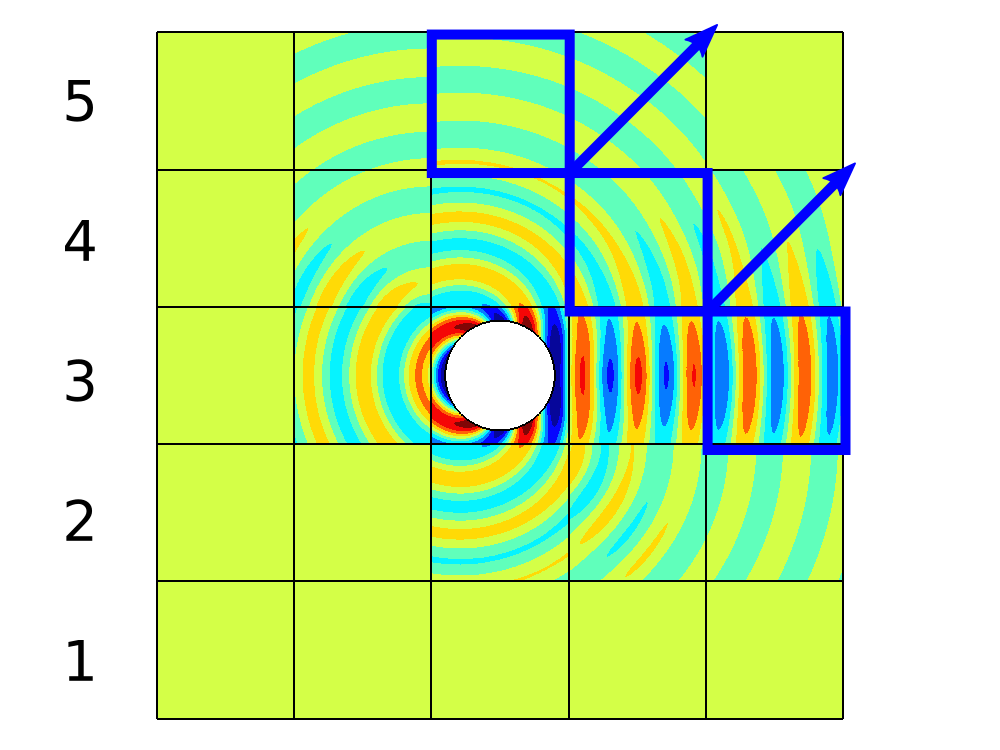} 
  \caption{7th step}
  \end{subfigure}
  \begin{subfigure}[!tb]{0.225\textwidth}
  \includegraphics[scale = 0.110]{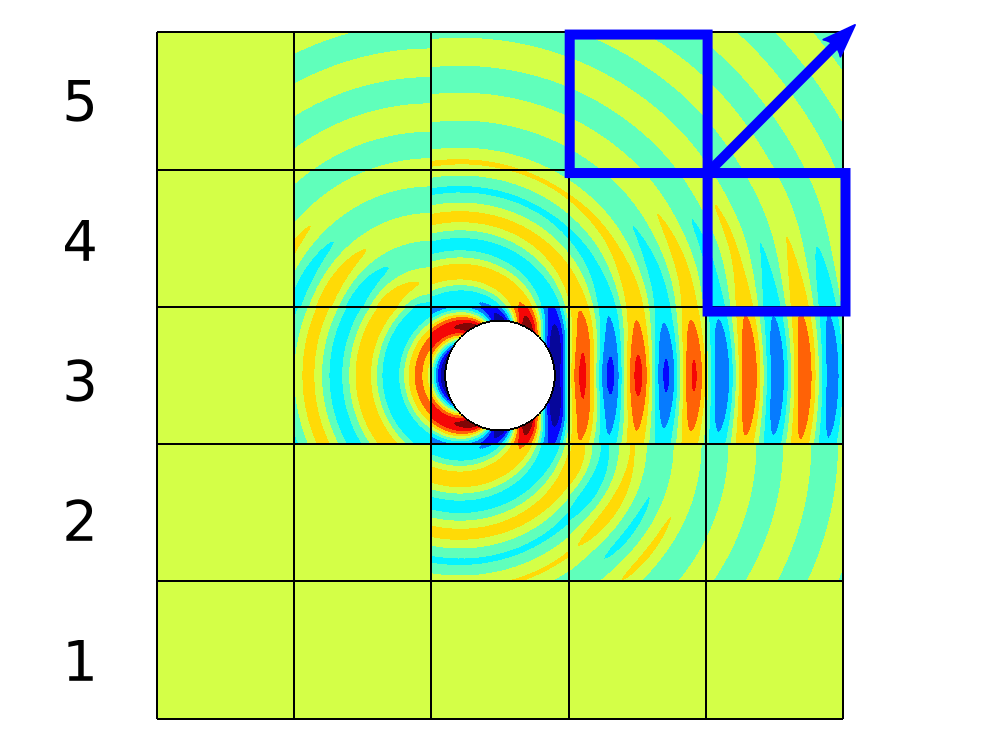} 
  \caption{8th step}
  \end{subfigure}
  \captionsetup{font=small, labelfont=bf}
  \caption{Scattering model in 2D ($k=2\pi$) with a snapshot of the solution at different steps of the first GMRES iteration using the full sweeping preconditioner. Each row of subdomains is assigned to one MPI rank, and processors are identified by the numbers on the left side. Subdomains processed in parallel are highlighted in blue.}
  \label{snapshot:steps_of_full_forward_sweep}
\end{figure}

In this part, the block Jacobi sweeping preconditioner is studied 
by considering a two-dimensional benchmark with a high-order finite element method. 
The proposed approaches and the computational results presented in this paper are implemented in parallel by MPI on a single multi-core computer. 
The linear systems arising from the sub-problems are solved by a sparse direct solver.
The mesh generation, mesh decomposition, and post-processing are credited by Gmsh \cite{geuzaine2009gmsh}.
The parallelism of our approach is realized by assigning subdomains to MPI ranks in a row-based fashion such that the $i$-th row of the checkerboard domain decomposition is processed by rank $i$.

The test case is a homogeneous scattering problem in free space within a rectangle geometry ($\Omega = [-1.25, 2.50 \cdot N_1 - 1.25] \times [-1.25, 2.50 \cdot N_2 - 1.25]$), 
which is decomposed into $N_1 \times N_2$ rectangular subdomains.
An incident plane wave is generated by a sound-soft circular cylinder of radius equal to $1$ which is located at the Origin.
On the circular cylinder, the Dirichlet boundary condition $u(\mathbf{x}) = - \exp^{i k x}$ is prescribed at the boundary of the sound-soft scatterer.
The Pad\'e-type HABC is prescribed on the exterior boundaries and the interior interfaces used as the absorbing boundary conditions and the transmission boundary conditions, respectively.
The compatibility conditions are prescribed at the corners and the cross-points treatment is prescribed at the cross-points. 
The parameters of the HABC operator are $N_{\text{pade}} = 8$ and $\phi = \pi/3$. 
The following numerical setting are considered: $P7$ finite elements with 3 elements per wavelength ($h \approx 1/21$). 
  
Fig. \ref{snapshot:steps_of_full_forward_sweep} and \ref{snapshot:steps_of_partial_forward_sweep} show snapshots of the solutions at different steps of forward sweep (sweep starts from the bottom-left corner to the top-right corner) of the 1st GMRES iteration with different sweeping preconditioners.
Although the forward sweep in Fig. \ref{snapshot:steps_of_full_forward_sweep} goes through the whole computational domain from the bottom-left corner to the top-right corner, it takes $8$ steps. If we take the backward sweep into account, there are $16$ steps of the preconditioning procedure at each iteration. In the second situation, it only takes $5$ steps in the forward partial sweeps (see Fig. \ref{snapshot:steps_of_partial_forward_sweep}). 
Fig. \ref{conv_test:2cut_scattering} shows snapshots of the solutions and residual histories of GMRES with the different preconditioners for the partition $N_1 \times N_2 = 5 \times 10$. 
All forward/backward (partial) sweeps of these preconditioners start from the bottom-left/top-left to the top-right/bottom-left.
The violet boxes indicate the cut location which separates partial sweeps. 

\begin{figure}[!tb]
  \centering
  \small
  \begin{subfigure}[!tb]{0.225\textwidth}
  \includegraphics[scale = 0.110]{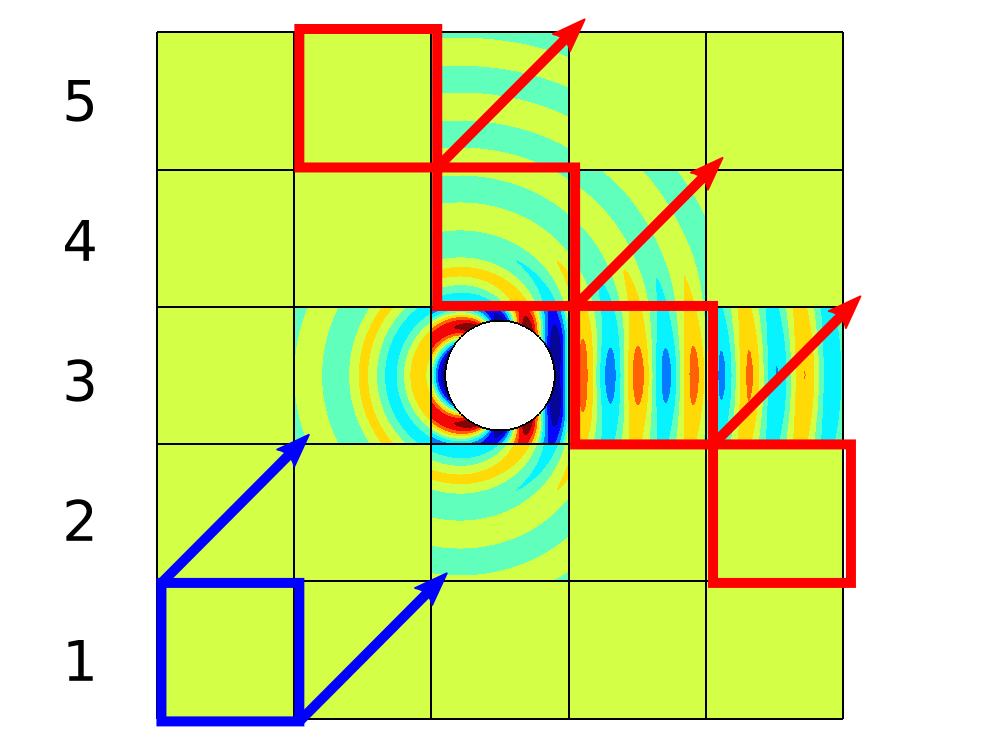} 
  \caption{1st step}
  \end{subfigure}
  \begin{subfigure}[!tb]{0.225\textwidth}
  \includegraphics[scale = 0.110]{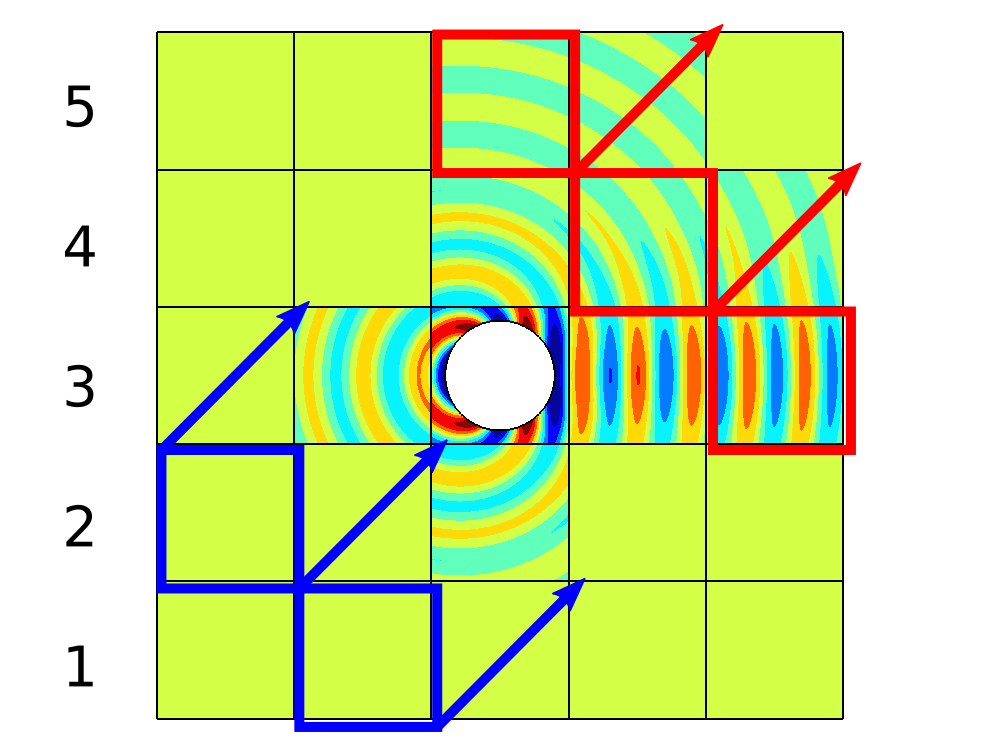} 
  \caption{2nd step}
  \end{subfigure}
  \begin{subfigure}[!tb]{0.225\textwidth}
  \includegraphics[scale = 0.110]{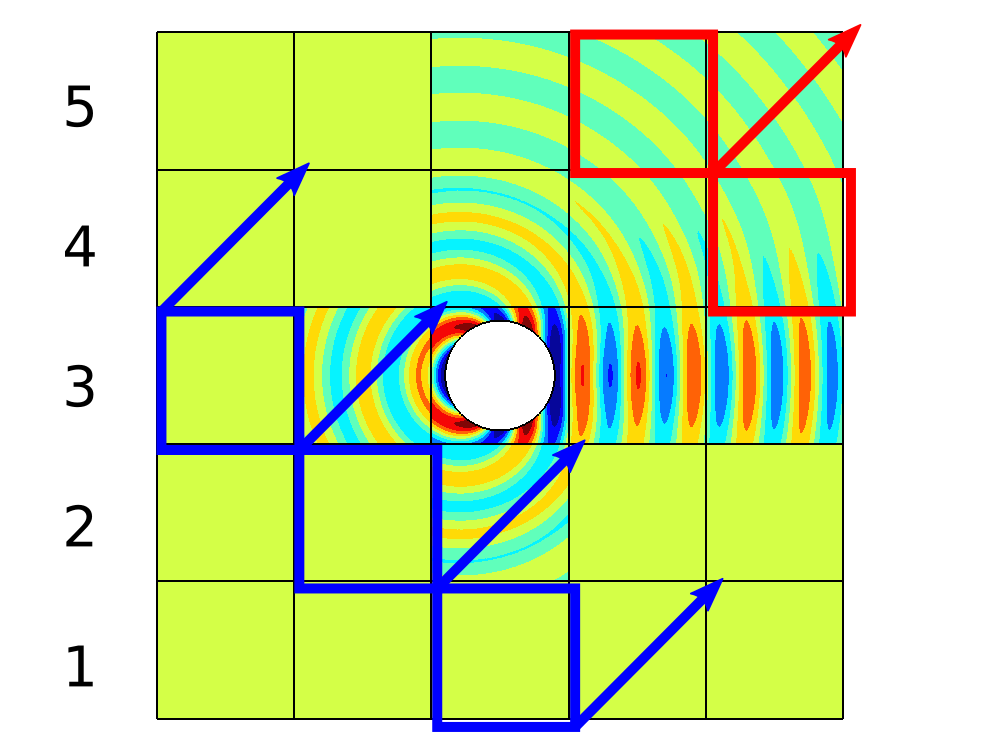} 
  \caption{3rd step}
  \end{subfigure}
  \begin{subfigure}[!tb]{0.225\textwidth}
  \includegraphics[scale = 0.110]{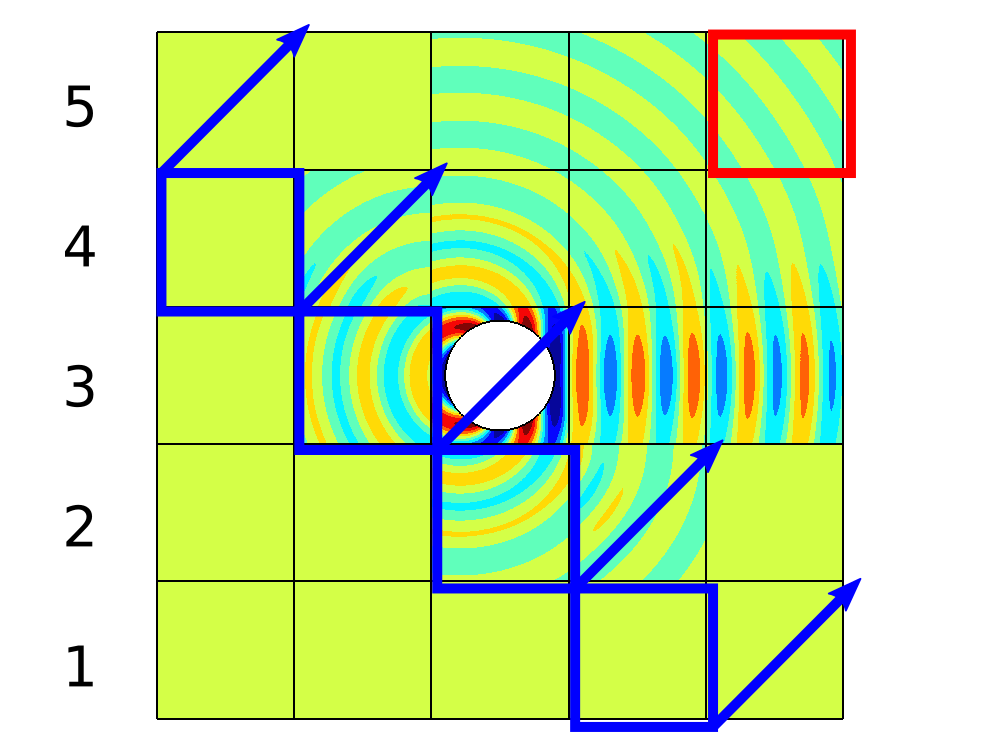} 
  \caption{4th step}
  \end{subfigure}
  \begin{subfigure}[!tb]{0.225\textwidth}
  \includegraphics[scale = 0.110]{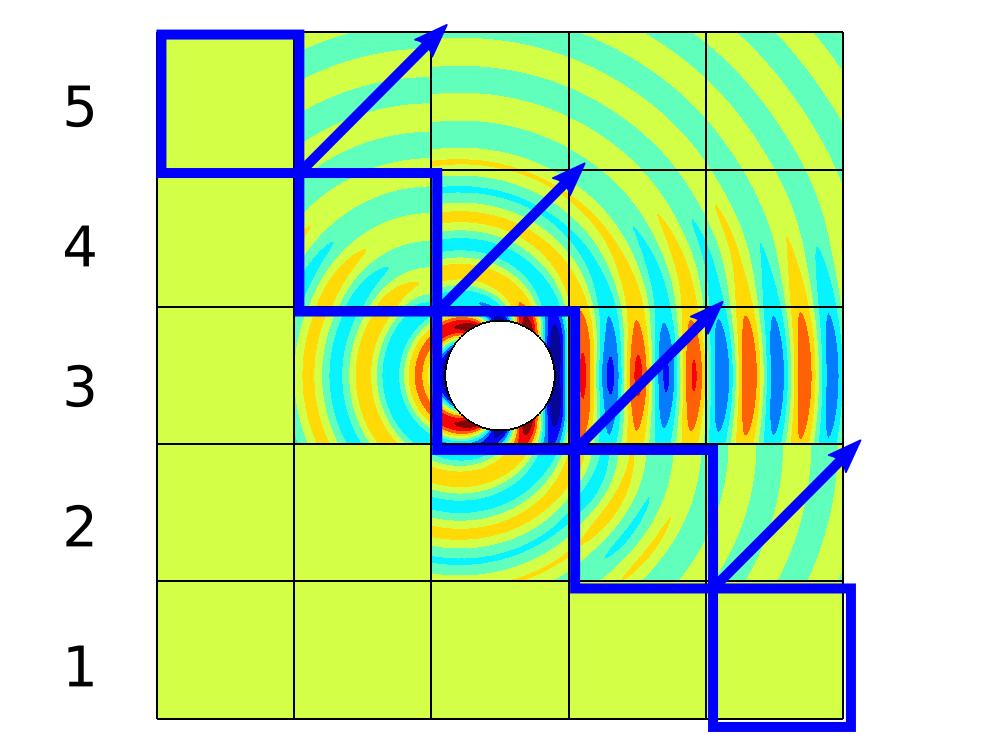} 
  \caption{5th step}
  \end{subfigure}
  \captionsetup{font=small, labelfont=bf}
  \caption{Scattering model in 2D ($k=2\pi$). Snapshot of the solution at different steps of forward sweep of 1st GMRES iteration with the block Jacobi sweeping preconditioner. The numbers at left side are processors' identities. Each row of subdomains is assigned to one MPI rank. Subdomains processed in parallel have same blue and red color, which represent two partial sweeps.}
  \label{snapshot:steps_of_partial_forward_sweep}
\end{figure}

\begin{figure}[!tb]
  \centering\small
  \includegraphics[height=4.5cm]{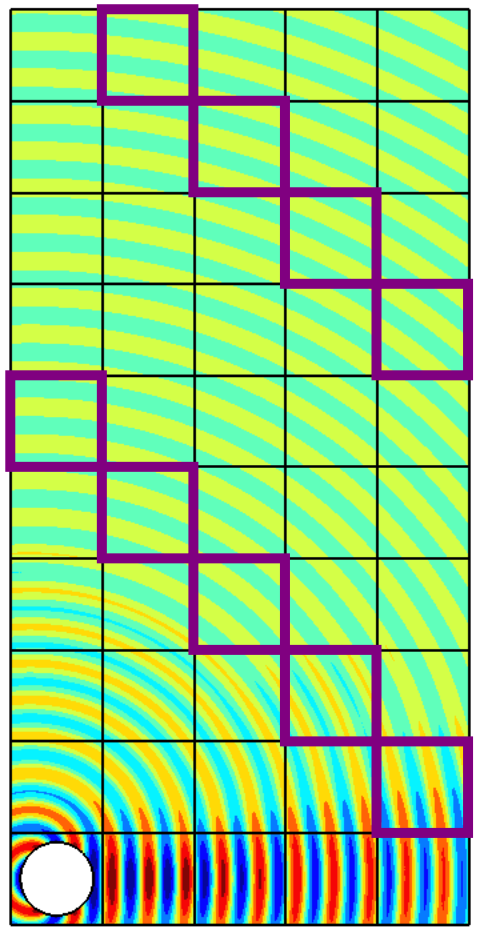} \hspace{3.20cm}
  \includegraphics[height=4.5cm]{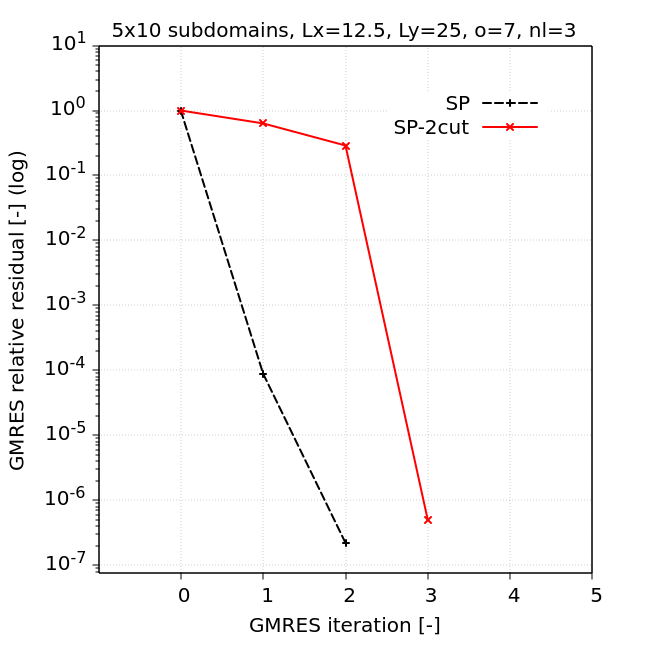}
  \caption{Scattering model in 2D ($k=2\pi$). The computational domain is decomposed into $N_1 \times N_2 = 5 \times 10$. Residual history with sweeping preconditioner (SP) and block Jacobi preconditioner (BSP) with two cuts.}
  \label{conv_test:2cut_scattering}
\end{figure}



\begin{table}[!t]
\caption{Scattering model in 2D ($k=20\pi$). Number of iterations and runtime in seconds with the two different preconditioners for different domain partitions. ``ni'' stands for the number of iterations, ``ns'' the number of steps per iteration, and ``t'' time. The number of MPI ranks is equal to $N_2$.}
\label{tab:1}       
%
%
\begin{tabular}{p{2.0cm}p{1.5cm}p{1.5cm}p{1.5cm}p{1.5cm}p{1.5cm}p{1.5cm}}
\hline\noalign{\smallskip}
$N_1 \times N_2$ & SP (ni) & SP (ns) & SP (t) & BSP (ni) & BSP (ns) & BSP (t)  \\
\noalign{\smallskip}\svhline\noalign{\smallskip}
$5 \times 5    $ & 3       & 16      & 32.6 s & 3        & 10       & 25.4 s   \\
$5 \times 10   $ & 3       & 26      & 49.0 s & 4        & 10       & 33.0 s   \\
$5 \times 15   $ & 4       & 36      & 90.8 s & 6        & 10       & 51.3 s   \\
$5 \times 20   $ & 5       & 46      &147.4 s & 8        & 10       & 70.9 s   \\
\noalign{\smallskip}\hline\noalign{\smallskip}
\end{tabular}
\end{table}

The residual histories obtained with the two different preconditioners 
in Fig. \ref{conv_test:2cut_scattering}, 
where the relative residual suddenly drops in residual history at the first iteration when a full sweeping preconditioner is used. With the block Jacobi sweeping preconditioner used, it happens at the third iteration, which corresponds to the number of partial sweeps, that is to say, there are two partial sweeps. 

The number of GMRES iterations and the runtime to reach a relative residual $10^{-6}$ 
with the two different preconditioners are given in Table \ref{tab:1}. 
The runtime corresponds to the GMRES resolution phase. 

\end{document}